\theoremstyle{plain}
\newtheorem{theorem}{Theorem}[section]
\newtheorem*{theorem*}{Theorem}
\newtheorem{proposition}[theorem]{Proposition}
\newtheorem{corollary}[theorem]{Corollary}
\newtheorem{lemma}[theorem]{Lemma}
\newtheorem{question}[theorem]{Question}
\theoremstyle{definition}
\newcommand{\enm}[1]{\ensuremath{#1}}          %
\newcommand{\cal}[1]{\mathcal{#1}}
\newcommand{\PP}{\enm{\mathbb{P}}}
\newcommand{\Cc}{\enm{\cal{C}}}
\newcommand{\Dd}{\enm{\cal{D}}}
\newcommand{\Ll}{\enm{\cal{L}}}
\renewcommand{\phi}{\varphi}
\renewcommand{\theta}{\vartheta}
\renewcommand{\epsilon}{\varepsilon}
\begin{document}

\title{Birational geometry of defective varieties, II}

\thanks{The authors were partially supported by GNSAGA of INdAM and by PRIN 2017 ``Moduli Theory and Birational Classification".} 

\author[E. Ballico]{Edoardo Ballico}
\address{Dipartimento di Matematica, Universit\`a degli Studi di Trento\\
Via Sommarive 14, 38123 Povo, Italy}
\email{edoardo.ballico@unitn.it}

\author[C. Fontanari]{Claudio Fontanari}
\address{Dipartimento di Matematica, Universit\`a degli Studi di Trento\\
Via Sommarive 14, 38123 Povo, Italy}
\email{claudio.fontanari@unitn.it}

\subjclass[2010]{14N05}

\keywords{Higher secant variety, tangential contact locus, defect, cone}

\date{}

\begin{abstract}
Let $X \subset \PP^r$ be smooth and irreducible and for $k \ge 0$ let $\nu_k(X)$ 
(resp., $\delta_k(X)$) be the $k$-th contact (resp., the $k$-th secant) defect of $X$. 
For all $k \ge 0$ we have the inequality $\nu_k(X) \ge \delta_k(X)$ and 
in the case $k=1$ we characterize projective varieties $X$ for which 
equality holds, $\dim \mathrm{Sing}(X) \le \delta _1(X) -1$ and the 
generic tangential contact locus is reducible. 
\end{abstract}

\maketitle

\section{Introduction}

This short note is a follow-up to our joint paper \cite{bf}, going back to 2004. 

Let $X \subset \PP^r$ be a reduced and irreducible projective variety of dimension $n$.
Fix an integer $k \ge 0$ and assume that $\mathrm{Sec}_k(X) \subset \PP^r$.

Let $\nu_k(X)$ be the dimension of the contact locus of a general hyperplane tangent at 
general points $p_0, \ldots, p_k \in X$ (see \cite[end of p. 152]{cc}). 
Since a general hyperplane tangent at $p_0, \ldots, p_k$ is a special hyperplane tangent at 
$p_0, \ldots, p_{k-1}$, by semicontinuity we have 
\begin{equation}\label{contact}
\nu_{k}(X) \ge \nu_{k-1}(X).
\end{equation}
Let $\delta_k(X)$ be the integer 
$$
\delta_k(X) := s_{k-1}(X)+n+1-s_{k}(X),
$$
where $s_{k}(X)$ denotes the dimension of the $k$-secant variety $\mathrm{Sec}_k(X)$ of $X$ 
(see \cite[(1.5.4)]{z} and \cite[Definition 1.4.6]{r2}).   

On the other hand, let $\delta^k(X)$ be the integer
$$
\delta^k(X) := \min \{r, n(k+1)+k \} - \dim \mathrm{Sec}_k(X)
$$
(see \cite[p. 152]{cc} and \cite[p. 13]{bf}).

By definition, we have 
\begin{equation}\label{defect}
\delta^k(X) = \delta_0(X) + \ldots + \delta_k(X),
\end{equation}
in particular $\delta^k(X) = \delta_k(X)$ if $k$ is the minimal integer such that $X$ is $k$-defective.

In \cite[Proposition 1]{bf} we pointed out the following inequality:

\begin{proposition}
Assume that $k$ is the minimal integer such that $X$ is $k$-defective. Then we have 
$\nu_k(X) \ge \delta^k(X) = \delta_k(X)$.
\end{proposition}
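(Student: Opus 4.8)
The plan is to bound $\nu_k(X)$ below by producing, inside the contact locus of the general hyperplane tangent to $X$ at $p_0,\dots,p_k$, a subvariety of dimension $\delta_k(X)$ — a component of the entry locus of a general point of $\mathrm{Sec}_k(X)$. Throughout I would use Terracini's Lemma. Introduce the incidence variety
\[
A:=\overline{\{(q_0,\dots,q_k,y)\in X^{k+1}\times\PP^r\ :\ y\in\langle q_0,\dots,q_k\rangle\}},
\]
which is irreducible of dimension $n(k+1)+k$, with its projections $\alpha\colon A\to X^{k+1}$ and $\pi\colon A\to\mathrm{Sec}_k(X)$. Fix a general point $(p_0,\dots,p_k,z)$ of $A$: then $p_0,\dots,p_k$ are general points of $X$, $z$ is a general point of $\mathrm{Sec}_k(X)$, and by Terracini's Lemma $T:=\langle T_{p_0}X,\dots,T_{p_k}X\rangle=T_z\mathrm{Sec}_k(X)$ is a linear space of dimension $s_k(X)$. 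A hyperplane is tangent to $X$ at $p_0,\dots,p_k$ precisely when it contains $T$, so the general such hyperplane $H$ is a general member of the linear system of hyperplanes through $T$ — nonempty since $\mathrm{Sec}_k(X)\subsetneq\PP^r$ — and its contact locus $\Sigma(H)=\overline{\{p\in X_{\mathrm{sm}}:\ T_pX\subseteq H\}}$ has dimension $\nu_k(X)$.

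Set $F_z:=\pi^{-1}(z)$, regarded via $\alpha$ as a subvariety of $X^{k+1}$, and let $E_z\subseteq X$ be its image under the first projection (the entry locus of $z$). Since $z$ is general, $\dim F_z=n(k+1)+k-s_k(X)=\sum_{j=0}^{k}\delta_j(X)$, which by \eqref{defect} equals $\delta^k(X)$ and equals $\delta_k(X)$ because minimality of $k$ gives $\delta_0(X)=\dots=\delta_{k-1}(X)=0$. The heart of the argument is the claim $\dim E_z=\delta_k(X)$, i.e. that $F_z\to E_z$ is generically finite. To see this I would pick a general $p_0\in E_z$ and let $q_{p_0}\colon\PP^r\dashrightarrow\PP^{r-1}$ be the projection from $p_0$: it carries the fibre of $F_z\to E_z$ over $p_0$, with finite fibres, onto the fibre over $q_{p_0}(z)$ of the incidence variety attached to $X':=\overline{q_{p_0}(X)}\subseteq\PP^{r-1}$ and to $\mathrm{Sec}_{k-1}$ in place of $X$ and $\mathrm{Sec}_k$, and $q_{p_0}(z)$ turns out to be a general point of $\mathrm{Sec}_{k-1}(X')$. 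Since $\mathrm{Sec}_{k-1}(X)\subsetneq\mathrm{Sec}_k(X)\subsetneq\PP^r$ (as $X$ is $k$-defective), neither $X$ nor $\mathrm{Sec}_{k-1}(X)$ is a cone over a general point of $E_z$; hence $\dim X'=n$ and $\dim\mathrm{Sec}_{k-1}(X')=\dim\overline{q_{p_0}(\mathrm{Sec}_{k-1}(X))}=s_{k-1}(X)$, so that fibre has dimension $nk+(k-1)-s_{k-1}(X)=\sum_{j=0}^{k-1}\delta_j(X)=0$. Therefore $\dim E_z=\dim F_z=\delta_k(X)$.

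Finally I would check that $E_z\subseteq\Sigma(H)$. For a general $p\in E_z$ choose $q_1,\dots,q_k\in X$ with $z\in\langle p,q_1,\dots,q_k\rangle$; since $F_z$ is irreducible and $F_z\to E_z$ generically finite, the tuple $(p,q_1,\dots,q_k)$ may be taken general among those whose span contains $z$, whence Terracini's Lemma gives $\langle T_pX,T_{q_1}X,\dots,T_{q_k}X\rangle=T_z\mathrm{Sec}_k(X)=T$. In particular $T_pX\subseteq T\subseteq H$, so $p\in\Sigma(H)$. Combining the three steps yields $\nu_k(X)=\dim\Sigma(H)\ge\dim E_z=\delta_k(X)=\delta^k(X)$, as desired.

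The step I expect to be the main obstacle is the middle one: establishing $\dim E_z=\delta_k(X)$, equivalently the generic finiteness of $F_z\to E_z$. This is precisely where the minimality of $k$ is used, through $\sum_{j<k}\delta_j(X)=0$; it also relies on the reduction by projection from a general point of $X$ and on the non-cone properties of $X$ and of $\mathrm{Sec}_{k-1}(X)$ along $E_z$, and one must verify that $F_z$ is irreducible so that a general point of $E_z$ lifts to a general point of $F_z$.
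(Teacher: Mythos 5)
Your proof is correct and follows essentially the same route as the paper: both arguments exhibit the entry locus of a general point of $\mathrm{Sec}_k(X)$, of dimension $\delta_k(X)$, as a subvariety of the contact locus of a general tangent hyperplane via Terracini's lemma. The only difference is that the paper imports the two key facts (pure dimension $\delta_k(X)$ of the entry locus, and tangency of $T_z\mathrm{Sec}_k(X)$ to $X$ along it) from \cite[Definition 1.4.5 and Corollary 1.4.7]{r2}, whereas you reprove the dimension statement directly from the incidence-variety count, using the minimality of $k$ and projection from a general point.
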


Indeed, the above inequality holds for every integer $k$: 

\begin{proposition}\label{c1}
We have $\nu_k(X) \ge \delta_k(X)$ for all $k \ge 0$.
\end{proposition}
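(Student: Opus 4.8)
The plan is to exhibit a distinguished irreducible subvariety $E$ of the tangential contact locus of a carefully chosen general tangent hyperplane, to check that $\dim E=\delta_k(X)$, and to conclude $\nu_k(X)\ge\dim E$. This is the argument of \cite[Proposition 1]{bf}, with $\delta^k(X)$ systematically replaced by $\delta_k(X)$: the natural ``entry locus'' of a general point of $\mathrm{Sec}_k(X)$ has dimension $\delta_k(X)$, not $\delta^k(X)$, and it is the object to use here. Concretely, I would fix general points $p_0,\dots,p_k\in X$ and a general point $z\in\langle p_0,\dots,p_k\rangle$, so that $z$ is a general point of $\mathrm{Sec}_k(X)$ and, by Terracini's lemma, $\Lambda:=T_z\mathrm{Sec}_k(X)=\langle T_{p_0}X,\dots,T_{p_k}X\rangle$ has dimension $s_k(X)\le r-1$. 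Let $H\supseteq\Lambda$ be a general hyperplane; since the hyperplanes tangent to $X$ at the general points $p_0,\dots,p_k$ are precisely those containing $\Lambda$, the hyperplane $H$ is a general one tangent at $p_0,\dots,p_k$, and hence its tangential contact locus has dimension $\nu_k(X)$. Finally, let
\[
E_z:=\{\,q\in X:\ z\in\langle q,q_1,\dots,q_k\rangle\ \text{for some}\ q_1,\dots,q_k\in X\,\}=X\cap\langle z,\mathrm{Sec}_{k-1}(X)\rangle
\]
(with the convention $\mathrm{Sec}_{-1}(X)=\emptyset$), and let $E\subseteq E_z$ be the irreducible component through $p_0$; observe that $p_0\in E_z$, since $z\in\langle p_0,\dots,p_k\rangle$.

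First I would compute $\dim E=\delta_k(X)$. The standing hypothesis $\mathrm{Sec}_k(X)\subsetneq\PP^r$ forces $\mathrm{Sec}_{k-1}(X)\subsetneq\mathrm{Sec}_k(X)$: otherwise the secant varieties would be stationary from the $(k-1)$-st on, hence all equal to $\langle X\rangle=\PP^r$. Consequently $\mathrm{Sec}_{k-1}(X)$ is not a cone with vertex a general point of $X$, so for general $q\in X$ the join $\langle q,\mathrm{Sec}_{k-1}(X)\rangle$ has dimension $s_{k-1}(X)+1$ (which is $\le s_k(X)\le r-1$). Consider the incidence variety $\mathcal I=\{(q,w)\in X\times\PP^r:\ w\in\langle q,\mathrm{Sec}_{k-1}(X)\rangle\}$ and its unique irreducible component $\mathcal I^{0}$ dominating $X$; being fibred over $X$ by the above joins, $\mathcal I^{0}$ has dimension $n+s_{k-1}(X)+1$, and it dominates $\mathrm{Sec}_k(X)$. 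Hence the general fibre of $\mathcal I^{0}\to\mathrm{Sec}_k(X)$ — a union of components of $E_z$ — has dimension $n+s_{k-1}(X)+1-s_k(X)=s_{k-1}(X)+n+1-s_k(X)=\delta_k(X)$; since $(p_0,z)$ is a general point of $\mathcal I^{0}$, the component $E$ through $p_0$ has dimension $\delta_k(X)$.

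Next I would prove that $E$ is contained in the contact locus of $H$; granting this, the proof is complete, because $E$ is then an irreducible subvariety of the contact locus of the general tangent hyperplane $H$ passing through the tangency point $p_0$, hence it lies in the component of that locus through $p_0$, which is part of the tangential contact locus, so $\nu_k(X)\ge\dim E=\delta_k(X)$. The containment $E\subseteq$ (contact locus of $H$) is the heart of the matter. For general $q\in E$ there is $w\in\mathrm{Sec}_{k-1}(X)$ with $z\in\langle q,w\rangle$; since $\mathrm{Sec}_k(X)$ is the join of $X$ with $\mathrm{Sec}_{k-1}(X)$, Terracini's lemma yields $T_qX\subseteq T_z\mathrm{Sec}_k(X)=\Lambda\subseteq H$, i.e.\ $H$ is tangent to $X$ at $q$. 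As the Gauss map $q\mapsto T_qX$ is a morphism on the smooth locus of $X$, the condition $T_qX\subseteq\Lambda$ is closed on the smooth locus of $E$ — which is dense, $E$ containing the general point $p_0$ — so it holds there, and passing to the closure shows $E$ is contained in the contact locus of $H$.

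The main obstacle is this last step: one must ensure that the \emph{a priori} special points of the entry locus $E$ are general enough for the Terracini inclusion $T_qX\subseteq T_z\mathrm{Sec}_k(X)$ to apply (it does on a dense open subset of $E$, and the rest follows by taking closures), and one must be careful about what ``tangent to $X$'' and ``contact locus'' mean at points where $X$ may be singular. A subsidiary point, needed in the dimension count, is the equality $\dim\langle q,\mathrm{Sec}_{k-1}(X)\rangle=s_{k-1}(X)+1$ for general $q\in X$, which is exactly where the hypothesis $\mathrm{Sec}_k(X)\subsetneq\PP^r$ enters.
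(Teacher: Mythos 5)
Your proof is correct and follows essentially the same route as the paper's: both exhibit the entry locus of a general point $z$ of $\mathrm{Sec}_k(X)$, of pure dimension $\delta_k(X)$, inside the contact locus of a general hyperplane containing $T_z\mathrm{Sec}_k(X)$. The paper simply cites \cite[Definition 1.4.5 and Corollary 1.4.7]{r2} for the two facts (dimension of the entry locus, tangency of $T_z\mathrm{Sec}_k(X)$ along it) that you verify by hand via the incidence correspondence and Terracini's lemma.
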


As an immediate consequence, a shorter proof of Terracini's theorem follows 
\cite[Theorem 1.1]{cc} (see Corollary \ref{c2}). 

Next, we focus the following conjecture, stated at the end of the Introduction of \cite{bf} 
and addressing the case in which equality holds:
\emph{we suspect that a defective variety with $\nu_k(X) = \delta_k(X)$ and reducible contact 
locus should be a cone}.

Here we are able to confirm it for $k=1$ under the assumption $\dim \mathrm{Sing}(X) \le \delta _1(X) -1$:

\begin{proposition}\label{i2}
Let $X$ be a non-degenerate variety $X\subset \PP^r$, $r\ge 2n+2$, such that $\nu _1(X) =\delta _1(X) >0$, 
$\dim \mathrm{Sing}(X) \le \delta _1(X) -1$ 
and the generic tangential contact locus is reducible. Then $\mathrm{Sing}(X) =\delta _1(X) -1$ and $X$ is a cone with $(\delta_1(X)-1)$-dimensional vertex.
\end{proposition}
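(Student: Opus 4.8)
The plan is to work with the tangential contact locus $\Sigma$ of a general hyperplane $H$ tangent at two general points $p_0, p_1 \in X$, and exploit the hypothesis that $\Sigma$ is reducible together with the equality $\nu_1(X) = \delta_1(X)$. By Terracini's lemma, $H$ contains the embedded tangent spaces $T_{p_0}X$ and $T_{p_1}X$, and the contact locus $\Sigma$ is the (closure of the) locus of smooth points at which $H$ is tangent; it has dimension $\nu_1(X) = \delta_1(X) =: \delta > 0$.

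**Step 1: Decompose the contact locus.**

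First I would write $\Sigma = \Sigma_0 \cup \Sigma_1 \cup \cdots$ into irreducible components. Since $p_0$ and $p_1$ play symmetric roles and are general, a monodromy/specialization argument (as in \cite{cc}, \cite{bf}) shows that $p_0$ lies on one component, say $\Sigma_0$, and $p_1$ on another, $\Sigma_1$, with $\Sigma_0 \neq \Sigma_1$ by reducibility; moreover each component has dimension exactly $\delta$ (a component of smaller dimension would contradict the computation of $\nu_1$ via a count, or can be absorbed). The key structural input from \cite{bf} and \cite{cc} is that through a general point of $X$ there passes an irreducible contact locus $\Sigma_{p}$ of dimension $\nu_1$, and these sweep out $X$ as $p$ varies; when the generic such locus (tangent at two points) breaks up, the pieces $\Sigma_0, \Sigma_1$ are themselves contact loci of hyperplanes tangent along a positive-dimensional family.

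**Step 2: Produce the vertex.**

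Now I would intersect the components. The essential claim is that the ``residual'' intersection $\Sigma_0 \cap \Sigma_1$ (or rather the locus common to the general members of the family of components) is nonempty of dimension $\ge \delta - 1$, and that every point of it is a singular point of $X$: at such a point the hyperplane $H$ is tangent ``from two directions,'' forcing the embedded tangent space to jump, hence the point lies in $\op{Sing}(X)$. Combined with the hypothesis $\dim \op{Sing}(X) \le \delta - 1$, this forces $\dim \op{Sing}(X) = \delta - 1$ and pins down $\op{Sing}(X)$ as (the support of) this intersection locus, a linear space $L$ of dimension $\delta - 1$: linearity follows because a general tangent hyperplane to $X$ is tangent along all of $L$, so $L$ is cut out by linear forms in the ideal of the tangential variety, i.e. $L$ is contained in the vertex of $\op{Sec}_1(X)$ in the classical sense, and the contact loci $\Sigma_i$ are cones over $L$. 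Finally, varying the pair $(p_0,p_1)$ and using that the $\Sigma_i$ sweep out $X$, one concludes that $X$ itself is a cone with vertex $L$, $\dim L = \delta - 1$.

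**Main obstacle.**

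The hard part will be Step 2: showing that the components of the reducible contact locus must share a common linear subspace of the expected dimension $\delta - 1$, and that this subspace is forced into $\op{Sing}(X)$. The reducibility hypothesis alone gives two components, but one must control \emph{how} they meet — a priori they could intersect in something of dimension anywhere from $0$ (if $\delta$ is large) up to $\delta-1$. The dimension count constraining the intersection from below should come from the equality $\nu_1 = \delta_1$ (which is tight, so there is no ``room'' for the components to be in general position inside $H$), essentially a Terracini-type computation on $\op{Sec}_1(X)$; I expect this to be the technical heart of the argument, and the bound $\dim\op{Sing}(X)\le\delta_1-1$ is exactly what is needed to turn the inequality $\dim(\Sigma_0\cap\Sigma_1)\ge\delta-1$ into an equality and to identify the vertex. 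The cone structure and linearity of the vertex then follow by a standard argument (a variety all of whose general tangent hyperplanes contain a fixed linear space, along which they are tangent, is a cone over that space).
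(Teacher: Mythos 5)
Your Step 1 matches the paper's setup (which is carried out in Lemma \ref{i1}: the components $\Sigma_0\ni p_0$, $\Sigma_1\ni p_1$ are in fact \emph{linear spaces} of dimension $\delta_1(X)$, proved by induction on $\delta_1$ via general hyperplane sections, with the base case $\delta_1=1$ handled by showing $\Sigma$ is a plane conic using \cite[Proposition 2.6]{cc}). The problem is Step 2, which you correctly identify as the heart of the matter but do not actually prove, and whose central claim is not sound as stated.

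The assertion that every point of $\Sigma_0\cap\Sigma_1$ lies in $\mathrm{Sing}(X)$ because the hyperplane is ``tangent from two directions, forcing the embedded tangent space to jump'' is unjustified: a point lying on two components of the contact locus is a singular point of the contact locus $\Sigma$, not of $X$; the hyperplane simply contains $T_qX$ there, exactly as at any other contact point. Moreover, even granting that $\Sigma_0\cap\Sigma_1$ has dimension $\delta_1(X)-1$ (which the paper does establish in Lemma \ref{i1}), this intersection a priori moves with the pair $(p_0,p_1)$ --- already for $\delta_1=1$ the two lines meet in a point $q_{p_0,p_1}$ depending on the pair --- so you still need an argument that produces a \emph{fixed} vertex. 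The paper gets around both difficulties by a different route: it inducts on $\delta_1(X)$ using general hyperplane sections $Y=X\cap H$ (for which $\nu_1(Y)=\nu_1(X)-1$, $\delta_1(Y)=\delta_1(X)-1$ and $\dim\mathrm{Sing}(Y)\le\dim\mathrm{Sing}(X)-1$ by Bertini), reducing to the base case $\delta_1(X)=1$, where $\mathrm{Sing}(X)$ is finite and the contact locus is a reducible conic $L_0\cup L_1$; there one invokes Russo's results on entry loci (\cite[Lemma 4.1.6]{r2}) to rule out the smooth case, and a deformation-theoretic argument (the second half of the proof of that lemma) to show that a general line of the family $\Cc_{p_0}$ through a general point passes through one of the finitely many singular points, which is what actually exhibits $X$ as a cone. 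Without an argument of this kind --- or some substitute for it --- your outline does not close.
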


Finally, we ask the following: 

\begin{question}\label{p2}
Under which assumption on $X$ the inequality $\nu_{k+1}(X) - \nu_k(X) \ge \delta_{k+1}(X) - \delta_k(X)$ 
holds for all $k$ such that $\mathrm{Sec}_{k+1}(X) \ne \PP^r$?
\end{question}

Indeed, we believe that the above inequality cannot hold without a suitable assumption: a potential source 
of counterexamples is provided by the following: 

\begin{proposition}\label{t2}
Fix an integer $k > 0$. Let $X\subset \PP^r$ be an integral and non-degenerate variety such that 
$\mathrm{Sec}_{k+1}(X) \subset \PP^r$. Assume $\nu_{k-1}(X) =\nu_k(X) >0$, $\delta_k(X)=0$ and 
$\delta_{k+1}(X)>0$. Then the general tangential contact locus of order $t\in \{k-1,k \}$ 
is of type II (i.e. reducible) and for a general $S\subset X$, $\sharp (S)=t+1$, there is a union 
of $t+1$ $\nu_k(X)$-dimensional linear spaces $L_p\subset X$, $p\in S$, such that $p\in L_p$ and 
for $t\in \{k-1, k \}$ a general union of $t+1$ $L_p$'s is linearly independent and gives the 
$t$-tangential contact locus. One of the following cases occurs:
\begin{enumerate}
\item $\nu_{k+1}(X)=\nu_k(X)$, a general union of $k+2$ $L_p$'s gives the $(k+1)$-tangential 
contact locus and it spans a linear space of dimension $(k+2)(\nu_k(X)+1)-1-\delta_{k+1}(X)$;
\item $\nu_{k+1}(X) > \nu_k(X)$ and for a general $p\in X$ there is an integral 
$\nu_{k+1}(X)$-dimensional variety $A_p\subset X$ such that a general union of 
$k+2$ $A_p$'s is the general $(k+1)$-tangential contact locus.
\end{enumerate}
Moreover, in case (ii) we have $\nu_k(X)\le \dim X-2$, a general join of $t\le k+1$ $A_p$'s 
has dimension $t(\nu_{k+1}(X)+1)-1$, while the join of $k+2$ general $A_p$'s is a linear space 
of dimension $(k+2)(\nu_{k+1}(X)+1)-1 -\delta_{k+1}(X)$.
\end{proposition}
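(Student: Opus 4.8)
The plan is to build on the Terracini-style analysis of tangential contact loci developed in \cite{cc} and in our previous paper \cite{bf}, applying Proposition~\ref{c1} as the basic inequality throughout. The key structural input is the classification of tangential contact loci into ``type I'' (irreducible, and a linear space once $\nu_k(X)>0$) and ``type II'' (reducible); under the hypotheses $\nu_{k-1}(X)=\nu_k(X)>0$, $\delta_k(X)=0$ and $\delta_{k+1}(X)>0$, the first thing to establish is that the order-$t$ contact locus for $t\in\{k-1,k\}$ cannot be of type I. Here one argues by contradiction: if it were irreducible it would be a single linear space $L$ of dimension $\nu_k(X)$ through each of the $t+1$ general points, the general hyperplane tangent along the span of $t+1$ such $L$'s would force, via a dimension count on the abstract secant/join variety and Terracini's lemma, the equality $\delta_{k+1}(X)=0$, contradicting the hypothesis. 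This simultaneously produces, for general $p\in X$, a $\nu_k(X)$-dimensional linear space $L_p\subset X$ with $p\in L_p$, and the statement that a general union of $t+1$ of them is linearly independent and equals the $t$-tangential contact locus follows from $\delta_{k-1}(X)=\delta_k(X)=0$ together with \eqref{defect}.

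Next I would pass to order $k+1$ and split into the two cases according to whether $\nu_{k+1}(X)=\nu_k(X)$ or $\nu_{k+1}(X)>\nu_k(X)$, which by \eqref{contact} are the only possibilities. In case (i), the $(k+1)$-tangential contact locus is again covered by linear spaces of the same dimension $\nu_k(X)$; since the contact locus of a hyperplane tangent at $k+2$ general points is contained in (and, by a monodromy/irreducibility argument on the family of the $L_p$'s, equals) the union of $k+2$ of the $L_p$'s, it remains to compute the dimension of its linear span. That span has dimension $s_{k+1}(X)-\bigl(\text{number of conditions}\bigr)$; unwinding the definitions of $\delta_k$ and $s_k$ and using $\delta_k(X)=0$ gives $s_{k+1}(X)=s_k(X)+n+1-\delta_{k+1}(X)$, and since the $k+2$ spaces each contribute $\nu_k(X)+1$ to the span minus the global defect, one gets exactly $(k+2)(\nu_k(X)+1)-1-\delta_{k+1}(X)$; the linear independence for $t\le k+1$ comes from the vanishing of the lower defects.

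In case (ii) the contact locus through a general point is a positive-dimensional, possibly non-linear, integral variety $A_p$ of dimension $\nu_{k+1}(X)$, and I would first note $A_p\supset L_p$ (by semicontinuity of contact loci in the order $t$), forcing $\nu_{k+1}(X)\ge\nu_k(X)+1$ and, since $A_p\subsetneq X$ is a proper subvariety through a general point with a nontrivial family of such $A_p$ sweeping $X$, also $\nu_{k+1}(X)\le\dim X-2$ (if $\nu_{k+1}(X)=\dim X-1$ the $A_p$ would be divisors and a standard argument shows the contact locus would then be all of $X$, contradicting $\mathrm{Sec}_{k+1}(X)\subsetneq\PP^r$). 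That a general union of $k+2$ $A_p$'s \emph{is} the $(k+1)$-tangential contact locus follows from the same irreducibility-of-the-family argument as in case (i). For the dimension counts: the join of $t\le k+1$ general $A_p$'s has the expected dimension $t(\nu_{k+1}(X)+1)-1$ precisely because the corresponding lower defects $\delta_{k-1},\delta_k$ vanish (no unexpected secant collapse yet), while the join of $k+2$ of them drops by exactly $\delta_{k+1}(X)$ and — this is the delicate point — is actually a \emph{linear} space; for that one uses that it is contained in the span of the $(k+1)$-tangential contact locus and has the same dimension, and that this span is cut out linearly by Terracini.

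The main obstacle I anticipate is the final claim in case (ii) that the join of $k+2$ general $A_p$'s is literally a linear space rather than merely having the dimension of one: this requires controlling the geometry of the (possibly non-linear) varieties $A_p$ well enough to see that their join fills its linear span, which is where one must combine Terracini's lemma for $\mathrm{Sec}_{k+1}(X)$ (whose general tangent space is spanned by the tangent spaces at $k+2$ general points, hence by the spans of the $A_p$'s) with the fact that the contact locus is a linear section phenomenon. A secondary technical point is making rigorous the ``monodromy'' step identifying the contact locus with the union of the $A_p$'s (rather than a proper subset), for which one invokes irreducibility of the incidence variety parametrizing $(p,\text{tangent hyperplane})$ over a general choice of the other points, as in \cite[\S1]{cc}.
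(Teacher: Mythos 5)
Your overall architecture (establish that the order-$t$ loci are reducible unions of linear spaces $L_p$, then split on $\nu_{k+1}(X)=\nu_k(X)$ versus $\nu_{k+1}(X)>\nu_k(X)$ and do a dimension count in each case) matches the paper's, but the paper's proof is essentially an assembly of three citations --- \cite[Lemma 3.5]{bbc} for the type II statement and the linearity and linear independence of the components, \cite[Theorem 2.4]{cc1} for the span dimension in case (i), and \cite[part (ii) of Proposition 3.9]{cc2} in case (ii) --- and the ad hoc arguments you substitute for them have genuine gaps. The most serious one is your route to ``type II with linear components'': you argue that a type I (irreducible) contact locus at order $t\in\{k-1,k\}$ would force $\delta_{k+1}(X)=0$, but no mechanism is given for why defectivity at level $k+1$ should be controlled by the structure of the contact locus at level $t\le k$, and this is not how the implication runs. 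The hypothesis actually doing the work is $\delta_k(X)=0$ together with $\nu_k(X)>0$: an irreducible positive-dimensional contact locus through $t+1$ general points would let the points move without changing the tangent span and would make $X$ $t$-defective, which is exactly the content of \cite[Lemma 3.5]{bbc}; moreover that lemma is also what gives you that the components are \emph{linear} spaces and linearly independent, two facts you assert in passing but never derive (your type I branch mentions a linear space, but in the type II branch the linearity of each $L_p$ is simply announced).

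Two further points. In case (i) your span computation (``each contributes $\nu_k(X)+1$ minus the global defect'') is a heuristic, not a proof; the identity $\dim\langle\cup_{p\in S}L_p\rangle=(k+2)(\nu_k(X)+1)-1-\delta_{k+1}(X)$ is precisely \cite[Theorem 2.4]{cc1} and needs that input (or an equivalent Terracini-type argument relating the span of the contact locus to $s_{k+1}(X)$). In case (ii) you claim and ``prove'' $\nu_{k+1}(X)\le\dim X-2$, which is stronger than the stated $\nu_k(X)\le\dim X-2$ and whose justification (divisorial $A_p$ forces the contact locus to be all of $X$) is not valid as given --- a contact locus can a priori be a union of divisors; the statement actually needed follows trivially from $\nu_{k+1}(X)>\nu_k(X)$ and $\nu_{k+1}(X)\le\dim X-1$ (the contact locus is a proper subvariety of the non-degenerate $X$). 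Your treatment of the final ``join is a linear space'' claim is acceptable in outline (an irreducible join whose dimension equals that of its linear span must fill it), but the dimension of that span again rests on \cite[part (ii) of Proposition 3.9]{cc2}, which your sketch does not replace.
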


We work over an algebraically closed field of characteristic $0$.

We are grateful to the anonymous referee for detailed and appropriate comments to a previous version of this paper. 

\section{The proofs}

\begin{proof}[Proof of Proposition \ref{c1}:]
For a general element $q$ of the $k$-secant variety $\mathrm{Sec}_k(X)$ of $X$, its entry locus 
(in the sense of \cite[Definition 1.4.5]{r2}) $\Sigma_q(X)$ has pure dimension $\delta_k(X)$. 
The tangent space to $\mathrm{Sec}_k(X)$ is tangent to $X$ at each point of $\Sigma_k(X)$ 
(\cite[part 1. of Corollary 1.4.7]{r2}).

\end{proof}

The following is Terracini's theorem \cite[Theorem 1.1]{cc}:

\begin{corollary}\label{c2}
We have $(k+1)\nu_{k}(X) \ge \delta^k(X)$.
\end{corollary}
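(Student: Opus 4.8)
The plan is to deduce Corollary \ref{c2} directly from Proposition \ref{c1} together with the basic superadditivity of the contact defects \eqref{contact} and the telescoping identity \eqref{defect}. First I would unwind the definitions: by \eqref{defect} we have $\delta^k(X) = \delta_0(X) + \delta_1(X) + \cdots + \delta_k(X)$, so it suffices to bound each summand $\delta_j(X)$ from above by $\nu_k(X)$. Proposition \ref{c1} gives $\delta_j(X) \le \nu_j(X)$ for every $j$ with $0 \le j \le k$, and inequality \eqref{contact} gives the chain $\nu_0(X) \le \nu_1(X) \le \cdots \le \nu_k(X)$, hence $\delta_j(X) \le \nu_j(X) \le \nu_k(X)$ for each such $j$.

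Summing these $k+1$ inequalities over $j = 0, \ldots, k$ then yields
\[
\delta^k(X) \;=\; \sum_{j=0}^{k} \delta_j(X) \;\le\; \sum_{j=0}^{k} \nu_k(X) \;=\; (k+1)\,\nu_k(X),
\]
which is exactly the asserted inequality. I would then remark that this recovers Terracini's theorem in the form stated in \cite[Theorem 1.1]{cc}, since $\delta^k(X) = \min\{r, n(k+1)+k\} - \dim \mathrm{Sec}_k(X)$ measures the secant defect and the bound $(k+1)\nu_k(X) \ge \delta^k(X)$ is precisely Terracini's estimate relating the dimension of the contact locus to the defectivity of the secant variety.

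I do not anticipate a genuine obstacle here: the corollary is a formal consequence of the preceding proposition and the two displayed identities, and the only point requiring a word of care is making sure the monotonicity \eqref{contact} is invoked in the correct direction (so that every $\nu_j(X)$ with $j \le k$ is dominated by $\nu_k(X)$, not the reverse). One should also note that the hypothesis $\mathrm{Sec}_k(X) \subsetneq \PP^r$ assumed throughout the section guarantees that all the quantities $\delta_j(X)$, $\nu_j(X)$ for $j \le k$ are well-defined, so the summation is legitimate. The argument is short enough that I would present it as a two-line proof immediately after the statement.
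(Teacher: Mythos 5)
Your proposal is correct and is essentially identical to the paper's own proof: both combine Proposition \ref{c1} termwise with the monotonicity \eqref{contact} and the telescoping identity \eqref{defect} to get $(k+1)\nu_k(X) \ge \nu_0(X)+\cdots+\nu_k(X) \ge \delta_0(X)+\cdots+\delta_k(X) = \delta^k(X)$. No differences worth noting.
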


\begin{proof}
By (\ref{contact}), Proposition \ref{c1}, and (\ref{defect}) we have 
\begin{eqnarray*}
(k+1) \nu_k(X) &\ge& \nu_0(X) + \ldots + \nu_k(X) \ge \\
&\ge& \delta_0(X) + \ldots + \delta_k(X) = \delta^k(X). 
\end{eqnarray*}
\end{proof}

In order to prove Proposition \ref{i2} we need the following:

\begin{lemma}\label{i1}
Fix integers $k\ge 1$, $n\ge 2$ and $r\ge (k+1)(n+1)$. Let $X\subset \PP^r$ be an integral and non-degenerate $n$-dimensional variety, which is $k$-defective, but not $(k-1)$-defective; if $k\ge 2$ assume that $X$ is not $(k-1)$-weakly defective. Assume $\nu _k(X)=\delta _k(X)$ and that the general tangential contact locus is reducible. Then each irreducible component of the general tangential contact locus is a linear space.
\end{lemma}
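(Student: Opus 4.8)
The plan is to exploit the equality $\nu_k(X) = \delta_k(X)$ together with Terracini-type positioning of the contact locus relative to the secant variety. Fix a general $S = \{p_0,\dots,p_k\} \subset X$ and let $\Gamma$ be the general tangential contact locus, i.e. the contact locus of a general hyperplane $H$ tangent to $X$ at all points of $S$. By Proposition \ref{c1} and the hypothesis, every irreducible component $\Gamma'$ of $\Gamma$ has dimension at most $\nu_k(X) = \delta_k(X)$, and $\Gamma'$ must meet the entry locus through a general point of $\mathrm{Sec}_k(X)$; since the entry locus is pure of dimension $\delta_k(X)$ and the tangent space to $\mathrm{Sec}_k(X)$ along it is constant (Terracini), each $\Gamma'$ has dimension \emph{exactly} $\delta_k(X)$ and is swept out by the fibres of the tangential projection. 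The key geometric input is that on the abelian-sum/entry-locus side the contact locus is cut out by a linear system whose restriction to each component is essentially the restriction of hyperplanes through a fixed linear space, so each component is a subvariety of $X$ on which the embedding line bundle restricts with a very constrained (low-dimensional) complete linear system.

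The central step is to show that each component $\Gamma'$ is \emph{linearly normal of minimal degree}, in fact a linear space. First I would argue, using the non-$(k-1)$-weak-defectivity hypothesis (and $k \ge 2$; for $k=1$ one uses instead that $X$ is not $(k-1)=0$-defective, i.e. irreducible non-degenerate) together with \cite{cc} on the structure of tangential contact loci, that the linear span $\langle \Gamma' \rangle$ meets $X$ along $\Gamma'$ only set-theoretically near the general point and that the Gauss map of $X$ contracts each $\Gamma'$ — so $\Gamma'$ lies in a fibre of the Gauss map. Then I would invoke the classical fact (Griffiths–Harris, Zak) that in characteristic $0$ the closure of a general fibre of the Gauss map of a smooth (or suitably mildly singular) projective variety is a \emph{linear space}. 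Since each irreducible component of $\Gamma$ sits inside such a fibre and has the same dimension as the fibre (both equal to the tangential defect contribution), the component coincides with the linear space. Reducibility of $\Gamma$ is used only to guarantee that we are genuinely in the situation where several such linear fibres occur and none of the degenerate collapsing phenomena of the irreducible case interfere with the dimension count.

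The step I expect to be the main obstacle is the passage from ``contact locus component'' to ``fibre of the Gauss map'', i.e. verifying that along a general point of $\Gamma'$ the tangent hyperplane $H$ is tangent along all of $\Gamma'$ with \emph{constant} tangent space, so that $\Gamma'$ is genuinely Gauss-contracted rather than merely contained in a positive-dimensional family of tangent hyperplanes. This is where the hypotheses $r \ge (k+1)(n+1)$ and non-$(k-1)$-weak-defectivity do real work: the former guarantees $\mathrm{Sec}_k(X)$ has the ``expected'' behaviour away from the defect so that the entry locus computation is clean, and the latter rules out the tangent hyperplane at $S$ being tangent along a positive-dimensional locus through the $p_i$'s for fewer than $k+1$ points, which would spoil the identification of $\Gamma'$ with a single Gauss fibre. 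Concretely I would: (i) set up the incidence correspondence between $k$-tuples $S$, tangent hyperplanes, and points of the contact locus; (ii) compute fibre dimensions on both sides using $\delta_k$ and the purity of the entry locus; (iii) deduce that over a general point of a component the tangent space of $X$ is locally constant; (iv) apply the Gauss-fibre linearity theorem component by component. A routine diagram chase then finishes, but keeping track of which genericity assumptions survive restriction to a component is the delicate part.
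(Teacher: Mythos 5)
Your argument hinges on the claim that each irreducible component $\Gamma'$ of the tangential contact locus is contracted by the Gauss map of $X$, so that you can quote the linearity of general Gauss fibres. This is the genuine gap: being in the contact locus of the general hyperplane tangent at $p_0,\dots,p_k$ only says that $T_qX$ is contained in the fixed linear space $M=\langle T_{p_0}X,\dots,T_{p_k}X\rangle$ for every $q\in\Gamma'$; it does not make $T_qX$ constant along $\Gamma'$ (the contact conic of the Veronese surface in $\PP^5$ is the standard illustration that contact-locus components are in general not Gauss fibres). Your steps (i)--(iii) announce a fibre-dimension count that would force local constancy of the tangent space, but no such count is actually carried out, and none is available from the stated hypotheses alone: the assertion ``each component is a Gauss fibre'' is essentially equivalent to $X$ being a cone ruled by those components, which is the content of Proposition \ref{i2} --- proved \emph{later}, using this lemma. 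So the route is circular at best. Worse, the lemma does not exclude smooth $X$, and for smooth $X$ Zak's theorem on tangencies makes the Gauss map finite, so a positive-dimensional Gauss-contracted $\Gamma'$ is impossible; yet the lemma must still produce linear components in that case (they would be linear subspaces of $X$ along which the tangent space varies). Your parenthetical remark that reducibility is only needed to avoid ``degenerate collapsing phenomena'' underestimates its role: reducibility is exactly what separates the linear-components conclusion from the irreducible (e.g.\ Veronese) case, and your argument never uses it in a substantive way.

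For comparison, the paper avoids Gauss maps entirely. The base case $k=1$, $\delta_1(X)=1$ is done by a degree argument: Terracini gives $\dim\langle\Sigma\rangle\le 2$, and since a general secant line meets $X$ scheme-theoretically in two points, $\Sigma$ is a degree-$2$ curve in a plane through two general points, hence a pair of lines. The case $\delta_1(X)\ge 2$ follows by induction on $\delta_1$ via general hyperplane sections ($\nu_1$ and $\delta_1$ each drop by one, and the contact locus restricts to the contact locus of the section), and the case $k\ge 2$ is reduced to $k=1$ by the tangential projection from $\langle\cup_{i\ge 2}T_{p_i}X\rangle$, which is birational by the non-weak-defectivity hypothesis and restricts to an isomorphism on $T_{p_0}X$ and $T_{p_1}X$. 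If you want to salvage your approach you would need an independent proof that the tangent space is constant along each component, which is where all the difficulty lies.
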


\begin{proof} Fix a general $(p_0,\dots ,p_k)\in X^{k+1}$ and let $\Sigma$ 
(or $\Sigma (p_0,\dots p_k)$) denote the tangential contact locus of $X$ at $\{p_0,\dots ,p_k\}$.
By assumption $\Sigma$ has $k+1$ irreducible components  $\Sigma _i$, $0\le i \le k$, 
(or $\Sigma _i(p_0,\dots p_k)$) with $p_j\in \Sigma _i$ if and only if
$i=j$ and $\dim \Sigma _i =\delta _k(X)$ for all $i$. If we want to stress the dependency on $X$ of the contact locus we write $\Sigma _X$ and $\Sigma _{i,X}$ instead of $\Sigma$ and $\Sigma _i$.

\quad (a) Assume $k=1$ and $\delta _1(X)=1$. Since $\nu _1(X) =1$, the so-called Terracini's theorem 
(\cite[Theorem 1.1]{cc}) gives $1\le \dim \langle \Sigma \rangle \le 2$. If $\Sigma$ were a line, 
then $X$ would be a linear subspace of $\PP^r$, contrary to the assumption. Thus $\Sigma$ is a plane curve. 
Since $\Sigma$ contains two general points, $p_0$ and $p_1$ of $X$, and the scheme-theoretic intersection 
of $X$ with one of its general secant lines is just $2$ points 
(\cite[Proposition 2.6]{cc}), we get $\deg (\Sigma )=2$, hence $\Sigma _0(p_0,p_1)$ and
$\Sigma _1(p_0,p_1)$ are different lines contained in a plane. Thus $\Sigma _0(p_0,p_1)$ and 
$\Sigma _1(p_0,p_1)$ meet at a single point $q_{p_0,p_1}$. 

\quad (b)  Assume $k=1$ and $\delta _1(X)\ge 2$. We do induction on $n$ and $\delta _1$. Let
$H\subset \PP^r$ be a general hyperplane. Since $r\ge (k+1)(n+1)-1$, $\{p_0,\dots ,p_k\}$ does not span 
$\PP^r$. Since $(p_0,\dots ,p_k)$ is general, we may assume $H\supset \{p_0,\dots ,p_k\}$.
Set $Y:= H\cap H$. Since $n\ge 2$, $Y$ is an integral $(n-1)$-dimensional variety spanning $H$. Since
we may take as $H$ a general hyperplane containing $\{p_0,p_1\}$, it is easy to check that 
$\Sigma_Y(p_0,p_1) = \Sigma(p_0,p_1)\cap H$. Thus $\nu _1(Y) =\nu (X)-1$. In the same way we
see that $Y$ is not weakly $(k-1)$-defective. By applying Terracini's lemma it is easy to check that
$\delta _1(Y) =\delta (X)-1$. Thus by induction on $n$ or $\delta_1(Y)$ we see
that $\Sigma_0\cap H$ and $\Sigma_1\cap H$ are linear spaces. 
Since $H$ is general, we get that $\Sigma_0$ and $\Sigma_1$ are linear spaces. By inductive assumption 
$\Sigma_1\cap H$ and $\Sigma_2\cap H$ are distinct $(\delta_1(X)-1)$-dimensional linear spaces meeting 
in a $(\delta_1(X)-2)$-dimensional linear subspace. By the generality of $H$, $\Sigma _0\cap \Sigma _1$ 
is a $(\delta _1(X)-1)$-dimensional linear space, i.e. $\dim \langle \Sigma \rangle = \delta _k(X)+1$. 

\quad ({c}) Assume $k\ge 2$. Since $X$ is not weakly $(k-1)$-defective and $r\ge k(n+1)$, the $k$-tangential 
projection $\tau_{X,k-1}$ is birational onto its image (\cite[Lemma 1]{bf}). We have $\nu_k(X) = 
\nu_1(X_{k-1})$ and $\delta _k(X) = \delta _1(X_{k-1})$ (\cite[eq (1) and (2)]{bf}). By taking as 
$\tau_{X,k-1}$ the linear projection from the linear space $\langle\cup _{i=2}^{k}T_{p_i}X\rangle$ we obtain 
the following more precise statement. Take $q_i:= \tau _{X,k-1}(p_i)$, $0\le i \le 1$, as two general
points of $X_{k-1}$ and taking $(p_0,p_1)$ general in $X^2$ after fixing $p_2,\dots ,p_k$. 
Since $\tau _{X,k-1}$ is birational onto its image, $\Sigma_{X_{k-1}}$ is reducible with two irreducible components, $\Sigma _{X_{k-1},1}$ and $\Sigma_{X_{k-1},2}$, which are the images of $\Sigma_0$ and 
$\Sigma_1$. Since $\delta _1(X_{k-1}) = \nu _1(Y)$ by steps (a) and (b) $\Sigma _{X_{k-1},0}$ and 
$\Sigma_{X_{k-1},1}$ are linear spaces of dimension $\delta_k(X)-1$ meeting in a 
$(\delta_k(X)-1)$-dimensional linear space, i.e. contained in a $(\delta _k(X)+1)$ linear subspace. 
Set $W:= \langle \cup _{i=2}{k} T_{p_i}X$. Since $X$ is not $(k-2)$-defective, we have 
$\dim W= (k-2)(n+1) -1$. Let $\ell _W: \PP^r\to \PP^{r-(k-1)(n+1)}$ denote the linear projection
from $W$ ($X_{k-1}$ is the closure of $\ell _W(X\setminus X\cap W)$ in $\PP^{r-(k-1)(n+1)}$. 
Since $X$ is not $(k-1)$-defective, $T_{p_1}X\cap W=\emptyset$. Hence $\ell _{W|T_{P_1}X}$ maps isomorphically onto it image, the tangent space to $X_{k-1}$ at $q_1:= \ell _w(p_1)$. 
Since $\Sigma _1\subset T_{p_1}X$ and $\Sigma _{1X_{k-1}}$ is a linear space, $\Sigma_1$ is a linear space. For the same reason $\Sigma _0$ is a linear space.

\end{proof}

\begin{proof}[Proof of Proposition \ref{i2}:]
We follow the set-up of the proof of Lemma \ref{i1} for the case $k=1$, i.e. 
the notation and steps (a) and (b) of that proof.

\quad (a) Assume $\delta _1(X) =1$. By assumption either $X$ is smooth or it has finitely many singular points. 

First assume that $X$ is smooth. By hypothesis, for a general $(p_0,p_1)\in X^2$ there is a reducible 
conic $C = L_0\cup L_1$ with $L_0$, $L_1$ lines $p_0\in L_1\setminus L_1\cap L_0$ and $p_1\in L_1\setminus L_1\cap L_0$. Thus $X$ is conic-connected by a reducible conic. Note that for a general $(p_0,p_1)$ the plane 
$\langle C\rangle$ is not contained in $X$ (we also have $X\cap \langle C\rangle =C$ scheme-theoretically). Since $\delta _1(X)=1$, for a general $q\in \langle C\rangle$ the reducible conic $C$ is an entry locus 
of $q$. Now apply \cite[Lemma 4.1.6]{r2}. 

Next assume $\mathrm{Sing}(X) \ne \emptyset$. By assumption $\mathrm{Sing}(X)$ is finite. For a general 
$p_0\in X$ let $\Ll _{p_0}$ denote the set of lines containing $p$ and $\Cc_{p_0} \subseteq \Ll _{p_0}$ 
be the set of all lines $L_0$ such that there is $p_1\in X$ with $L_0\cup L_1$ as the contact locus. 
Set $\Dd _{p_0}:= \cup _{L\in \Cc _{p_0}} L\subseteq X$. For any general $(p_0,p_1)\in X^2$
set $q_{p_0p_1}$ denote the point $L_0\cap L_1$, where $L_0\cup L_1$ is the contact locus of $\langle 
T_{p_0}X\cup T_{p_1}X\rangle$. Since $X$ is irreducible and the contact locus of a general $\langle T_{p_0}\cup T_{p_1}X\rangle$ is uniquely determined by $(p_0,p_1)$, for a general $p_0$ the set $\Ll _{p_0}$ is irreducible. Since each element of $\Ll_{p_0}$ is a line and $\mathrm{Sing}(X)$ is finite, to prove that $X$ is a cone it is sufficient to prove that a general $L\in \Ll _{p_0}$ contains a singular point of $X$.
This follows from a general deformation theory argument, as explained in the second half of the proof of \cite[Lemma 4.1.6]{r2}.

Since $L_i\cong \PP^1$ and $h^1(N_{L_i}) =1$, we get $h^1(N_{C|L_i}) =0$, hence 
%$h^0(N_{C|L_i}) \ge h^0(N_{L_i}) +1$. Thus $h^1(N_C)=0$. From (\ref{eqa1}) we get 
%$h^0(N_C) > h^0(N_{L_0}+h^0(N_{L_1})$. Since $h^1(N_C)=0$, the Hilbert scheme $\mathrm{Hilb}(X)$ of $X$ at 
%$[C]$ is smooth and of dimension $h^0(N_C)$.
%Since $h^1(N_{L_i})=0$, the Hilbert scheme $\mathrm{Hilb}(X)$ of $X$ at $[L_i]$ is smooth and of dimension 
%$h^0(N_{L_i})$. Since $h^0(N_C) > h^0(N_{L_0}+h^0(N_{L_1})$, the curve $C$ is smoothable inside $X$. 
%Thus for a general $(q_0,q_1)\in X^2$ there is both a reducible conic and infinitely many smooth conics. 
%We get $\delta _1(X)>1$, a contradiction.

\quad (b) Assume $\delta _1(X) \ge 2$. We do induction on the integer $\delta _1(X)$. Let $H\subset \PP^r$ 
be a general hyperplane. Set $Y:= X\cap H$. By Bertini's theorem $Y$ is an integral variety spanning $H$ 
and $\mathrm{Sing}(Y)$ has dimension at most $\dim \mathrm{Sing}(X)-1$, in particular $\mathrm{Sing}(Y) =\emptyset$ if $\dim \mathrm{Sing}(X) \le 0$. 

Since $\nu_1(Y)=\nu_1(X)-1$ and $\delta_1(Y) =\delta_1(X)-1$, as in step (b) of the proof of 
Lemma \ref{i1} we get by inductive assumption that $\dim \mathrm{Sing}(X) =\delta_1(X)-1$ and 
that $Y$ is a cone with $(\delta_1(X)-2)$-dimensional vertex and no other singular points.

\end{proof}

In order to prove Proposition \ref{t2} we need the following:

\begin{lemma}\label{t1}
Fix an integer $k \ge 1$. Let $X\subset \PP^r$ be an integral and non-degenerate variety such that 
$\mathrm{Sec}_k(X) \subset \PP^r$. Then $\nu_{k-1}(X)$ and $\nu_k(X)$ are well-defined. If 
$\nu_{k-1}(X)=\nu _k(X) >0$, then the generic tangential contact locus of $\mathrm{Sec}_{k-1}(X)$ 
and $\mathrm{Sec}_k(X)$ are of type II (i.e. reducible) and for a general $p\in X$ there is an integral 
$\nu_{k-1}(X)$-dimensional variety $A_p\subset X$ such that $p\in A_p$ and for a general
$(p_0,\dots ,p_{k+1})\in X^{k+2}$ we have $\Gamma _k(p_0,\dots ,p_k) = A_{p_0}\cup \cdots \cup A_{p_k}$ 
and $\Gamma_{k+1}(p_0,\dots ,p_{k+1}) =\Gamma _k(p_0,\dots ,p_k) \cup A_{p_{k+1}}$.
\end{lemma}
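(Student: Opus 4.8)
The plan is to reduce everything to the already-settled case $k=1$ via tangential projection. First I would check well-definedness: since $\mathrm{Sec}_k(X) \subsetneq \PP^r$ we also have $\mathrm{Sec}_{k-1}(X) \subsetneq \PP^r$, so a general hyperplane tangent at $k$ (resp. $k+1$) general points of $X$ exists and $\nu_{k-1}(X)$, $\nu_k(X)$ are defined. Then I would separate two cases according to whether $X$ is $(k-1)$-weakly defective.

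If $X$ is \emph{not} $(k-1)$-weakly defective, the $(k-1)$-tangential projection $\tau := \tau_{X,k-1}$ (projection from $\langle T_{p_1}X,\dots,T_{p_{k-1}}X\rangle$ for general $p_1,\dots,p_{k-1}$) is birational onto its image $X_{k-1}$ by \cite[Lemma 1]{bf}, and by \cite[eq.\ (1),(2)]{bf} one has $\nu_k(X)=\nu_1(X_{k-1})$, $\nu_{k-1}(X)=\nu_0(X_{k-1})$, and the corresponding identities for the $\delta$'s. The hypothesis $\nu_{k-1}(X)=\nu_k(X)>0$ thus becomes $\nu_0(X_{k-1})=\nu_1(X_{k-1})>0$, so $X_{k-1}$ is $1$-weakly defective with positive contact defect; the classical structure of such varieties (the tangential contact locus of a general tangent hyperplane at a general pair of points is reducible, and through a general point passes a $\nu_1$-dimensional subvariety which is a component of it) gives a family $\{B_q\}_{q\in X_{k-1}}$ of integral $\nu_1(X_{k-1})$-dimensional subvarieties. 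Pulling back along the birational map $\tau$ and spreading out over the choice of the auxiliary points $p_1,\dots,p_{k-1}$, I would produce the family $\{A_p\}_{p\in X}$ with $p\in A_p$, $\dim A_p=\nu_{k-1}(X)$, and verify the two displayed identities $\Gamma_k(p_0,\dots,p_k)=A_{p_0}\cup\cdots\cup A_{p_k}$ and $\Gamma_{k+1}(p_0,\dots,p_{k+1})=\Gamma_k(p_0,\dots,p_k)\cup A_{p_{k+1}}$: the first because $\tau$ carries the $k$-tangential contact locus of $X$ to the $1$-tangential contact locus of $X_{k-1}$ and is birational, the second by the same argument applied to $\tau_{X,k}$ together with the hyperplane-semicontinuity identity $\nu_{k+1}\ge\nu_k$ forcing the extra component through $p_{k+1}$ to again have dimension $\nu_k(X)$.

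If $X$ \emph{is} $(k-1)$-weakly defective, let $m<k-1$ be minimal with $X$ $m$-weakly defective; then the $m$-tangential projection is still birational and the same reduction applies with $m$ in place of $k-1$, landing on a variety $X_m$ for which the chain $\nu_0(X_m)=\cdots=\nu_{k-m}(X_m)$ (reindexed) still collapses, and one argues inductively on $k-m$. Alternatively, and perhaps more cleanly, one runs the hyperplane-section descent of step (b) in the proof of Lemma \ref{i1}: cutting by a general hyperplane through the general points drops $n$, $\nu$ and $\delta$ all by one and preserves non-(weak-)defectivity, so induction on $n$ reduces to the base case already handled. I would pick whichever bookkeeping is shortest.

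The main obstacle I expect is not any single deep input — the birationality of tangential projections and the structure theory of $1$-weakly defective varieties are quoted — but rather the careful \emph{descent of the component decomposition} through $\tau$: one must check that the $\nu_1(X_{k-1})$-dimensional components $B_q$ of the contact locus downstairs lift to genuinely $\nu_{k-1}(X)$-dimensional integral subvarieties $A_p$ of $X$ (not merely to their images), that distinct components stay distinct after lifting, and that the family $\{A_p\}$ is independent of the auxiliary points $p_1,\dots,p_{k-1}$ in the precise sense needed for the two $\Gamma$-identities to hold for a \emph{general} $(k+2)$-tuple. This is a spreading-out / monodromy argument: one works over the base parametrizing the choices, uses that the generic fibre of $\tau$ is a single reduced point, and invokes irreducibility of $X$ (hence of the incidence variety of pairs $(p,A_p)$) to conclude that a general $A_p$ is intrinsically attached to $p$. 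Handling the possible reducibility of intermediate contact loci — matching up which component maps to which under $\tau$ — is the fiddly part, but it is of the same nature as step (c) in the proof of Lemma \ref{i1} and presents no new conceptual difficulty.
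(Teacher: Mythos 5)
Your strategy is genuinely different from the paper's, but it has a structural flaw that undermines the whole plan. Recall that $X$ is $(k-1)$-weakly defective precisely when the contact locus of a general hyperplane tangent at $k$ general points is positive-dimensional, i.e.\ precisely when $\nu_{k-1}(X)>0$. The hypothesis of the lemma is $\nu_{k-1}(X)=\nu_k(X)>0$, so $X$ is \emph{always} $(k-1)$-weakly defective here: your main case (``$X$ is not $(k-1)$-weakly defective'') is vacuous, and the entire burden falls on the case you only sketch. In that remaining case the reduction does not go through as described: if $m$ is minimal with $X$ $m$-weakly defective, only $\tau_{X,m}$ (projection from $m$ tangent spaces, using non-$(m-1)$-weak-defectivity) is known to be birational, and if $m=0$ no birational tangential projection is available at all; moreover the chain $\nu_m(X),\dots,\nu_k(X)$ need not collapse --- the hypothesis only equates the top two terms --- so the claimed reduction to a one-step situation on $X_m$ fails. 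The alternative hyperplane-section descent also lacks a base case: cutting by hyperplanes lowers $\nu$ until it reaches $0$, at which point the hypothesis $\nu_{k-1}>0$ of the statement being proved no longer holds.

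For comparison, the paper's proof uses none of this machinery. For a general $(p_0,\dots,p_{k+1})\in X^{k+2}$ and each subset $S\subset\{p_0,\dots,p_{k+1}\}$ with $\sharp(S)=k+1$, the tangential contact locus $\Gamma_k(S)$ is contained in $\Gamma_{k+1}(p_0,\dots,p_{k+1})$; by the hypothesis these have the same dimension and both are equidimensional, so every irreducible component of $\Gamma_k(S)$ is a component of $\Gamma_{k+1}$. Since $p_i\notin W(S)$, hence $p_i\notin\Gamma_k(S)$, whenever $p_i\notin S$, varying $S$ over the $(k+1)$-element subsets forces a separate component $A_{p_i}$ through each $p_i$, which yields reducibility (type II) and both displayed identities at once. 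If you want to salvage your approach you would need to prove the component decomposition directly in the weakly defective regime, at which point you would essentially be reproving this containment-plus-equidimensionality argument; I recommend adopting it outright.
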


\begin{proof}
Fix a general $(p_0,\dots ,p_k)\in X^{k+2}$. Since $\mathrm{Sec}_k(X) \subset \PP^r$, Terracini's lemma 
(\cite[Corollary 1.11]{a}) gives that $M:= \langle T_{p_0}X\cup \cdots \cup T_{p_{k+1}}\rangle$
is a proper linear subspace of $\PP^r$. Thus both the contact locus $\Gamma _{k+1}(X)$ of $X$  
and the union $\Gamma _{k+1}(X)$ of its irreducible components containing one of the points 
$p_0,\dots ,p_{k+1}$ are well-defined. Thus for each $S\subset \{p_0,\dots ,p_{k+1}\}$ such that 
$\sharp (S)=k+1$ the linear space $W(S):=\langle \cup _{p\in S}T_pX\rangle$ is a proper linear subspace 
of $\PP^r$, hence the union $\Gamma _k(S)$ of the irreducible components of the contact locus of $W(S)$ 
and $X$ containing at least one point of $S$ is well-defined. For a general $p_i\in X$ we have 
$p_i\notin W(\{p_0,\dots ,p_{k+1}\}\setminus \{p_i\})$, hence $p_i\notin \Gamma _k(\{p_0,\dots ,p_{k+1} \} \setminus \{p_i\})$. By assumption $\dim \Gamma_{k+1}(X) = \dim \Gamma_k(S)$ and by the definition of tangential contact locus we have $\Gamma _{k+1}(X)\supseteq \Gamma_k(S)$ for all $S$. Since all general tangential contact loci are equidimensional, the claim follows.

\end{proof}

\begin{proof}[Proof of Proposition \ref{t2}:]
By \cite[Lemma 3.5]{bbc} the general tangential locus of order $t \in \{k-1,k \}$ is of type II 
(i.e. reducible) and its irreducible components are linearly independent $\nu_k(X)$-dimensional linear spaces. We get the existence of the linear spaces $L_p$, $p$ general in $X$, and that a general union 
of $k+1$ of them is linearly independent. Since $\mathrm{Sec}_{k+1}(X) \subset \PP^r$, $\nu_{k+1}(X)$ 
is well-defined and $\nu_{k+1}(X) \ge \nu_k(X) =\nu_{k-1}(X)$. It is easy to check that the general 
$(k+1)$-tangential contact locus is of type II (i.e. reducible).

\quad (a) Assume $\nu_{k+1}(X) = \nu_k(X)$. As in the proof of Lemma \ref{t1} we see that a general 
$(k+1)$-tangential contact locus is a union of $k+2$ general $L_p$'s. By \cite[Theorem 2.4]{cc1} 
applied to the integer $k+1$ we have $\dim \langle \cup_{P\in S} L_p \rangle = (k+2)(\nu_k(X)+1)-1 
-\delta_{k+1}(X)$.

\quad (b) Assume $\nu_{k+1}(X) >\nu _k(X)$. Since the $(k+1)$-tangential contact locus of type II (i.e. reducible) for a general $S\subset X$ with $\sharp(S) =k+2$, for each $p\in S$ there an irreducible 
$\nu_{k+1}(X)$-dimensional variety $A_p\subset X$ such that $p\in A_s$ and $\cup _{p\in S} A_s$ is 
contained in the contact locus of $X$ and the linear space $\langle _{p\in S} T_pX\rangle$. 
By \cite[part (ii) of Proposition 3.9]{cc2} we have $\dim \langle \cup_{p\in S} A_p\rangle = 
(k+2)(\nu_{k+1}(X) +1)-1 -\delta_{k+1}(X)$. Fix any $S'\subset S$ such that $\sharp(S') = k+1$ 
and define the linear spaces $L_p$ with respect to $S'$.

\end{proof}

Note that if (as in the statement of Proposition \ref{t2}) we have linear subspaces $L_p\subset X$, $p$ general in $X$, with positive dimension $w$ and such that a general union of $k+2$ of them spans a linear subspace of dimension $(k+2)(w+1)-\delta$ with $\delta >0$, then $\sum_{i=1}^{k+2} \delta _i(X) \ge \delta$. If we also have that a general union of $k+1$ $L_p$'s is linearly independent, then we have $\delta_{k+2}(X)\ge \delta$.

\end{document}